\newtheorem*{theorem*}{Theorem}
\theoremstyle{plain}
\begin{document}
%
\title{Time series aggregation for optimization: One-size-fits-all?}
%
%
%

\author{Sonja~Wogrin,~\IEEEmembership{Senior Member,~IEEE}
\thanks{S. Wogrin, Institute of Electricity Economics and Energy Innovation, Graz University of Technology, Austria,
e-mail: wogrin@tugraz.at}
}

\markboth{Journal of \LaTeX\ Class Files,~Vol.~14, No.~8, August~2015}%
{Shell \MakeLowercase{\textit{et al.}}: Bare Demo of IEEEtran.cls for IEEE Journals}

\maketitle

\begin{abstract}
One of the fundamental problems of using optimization models that use different time series as data input, is the trade-off between model accuracy and computational tractability. 
To overcome computational intractability of these full optimization models, the dimension of input data and model size is commonly reduced through time series aggregation (TSA) methods. However, traditional TSA methods often apply a one-size-fits-all approach based on the common belief that the clusters that best approximate the input data also lead to the aggregated model that best approximates the full model, while the metric that really matters –the resulting output error in optimization results – is not well addressed. 
In this paper, we plan to challenge this belief and show that output-error based TSA methods with theoretical underpinnings have unprecedented potential of computational efficiency and accuracy.


\end{abstract}

\begin{IEEEkeywords}
time series aggregation, optimization.
\end{IEEEkeywords}

\IEEEpeerreviewmaketitle

\section{Introduction}
\label{sec:Introduction}



\IEEEPARstart{T}{he} traditional and vast majority of time series aggregation (TSA) frameworks focus on best approximating the original data (i.e. to reduce difference between cluster centroids and actual data, which we will refer to as the \textit{input error}) with aggregated or clustered data, completely separating the realm of data from the realm of optimization. Such traditional \textit{a-priori }methods are based on the common belief that the clusters that best approximate the data also lead to the aggregated model that best approximates the full model (i.e. minimize the \textit{output error}, the difference between full and aggregated model results), which is not necessarily true, as we will show through an illustrative case. Examples of such a-priori methods and applications, including k-medoids \cite{teichgraeber2019clustering} or k-means, can be found in a recent literature review \cite{hoffmann2020review}. 
Some a-priori TSA methods keep additional information about the original time series that are important for optimization model results, such as adding periods with extreme events, e.g. \cite{dominguez2011selection,de2013optimal,munoz2015endogenous}. While this might improve model outcomes, the choice of extreme days is still taken with respect to input data only. As pointed out by \cite{scott2019clustering}, the correct extreme periods cannot be known in advance because they depend on endogenous optimization outcomes, which leads us to \textit{a-posteriori}\footnote{A-posteriori methods employ preliminary optimizations to improve the aggregation process.} methods. Some examples of a-posteriori methods include \cite{postges2019time, sun2019data,li2022representative}; however, they either contain some kind of heuristic components or are tailored to toy problems.


In this paper we show that first, a-priori TSA methods are not a one-size-fits-all solution when used for optimization models, and that they should ultimately be replaced by a-posteriori methods. To that purpose, we first define full and aggregated optimization models and apply a traditional k-means clustering technique to an illustrative example in section \ref{sec:FullAggregated}.
And second, that when a-posteriori methods are based on theoretical underpinnings - as the basis-oriented method proposed in section \ref{sec:BasisTSA} - they outperform a-priori methods by orders of magnitude. Section \ref{sec:conclusion} concludes the paper.


\section{Full and aggregated optimization models}
\label{sec:FullAggregated}

We consider the following generic formulation of a full (left) and its corresponding aggregated (right) optimization problem:

\begin{minipage}{0.15\textwidth}
  \begin{subequations}\label{eqn:FullModel}
  \begin{align*}
      min    f(x,TS)        \\
      \text{s.t. } g(x,TS) \leq 0 
    \end{align*}
    \end{subequations}
\end{minipage}
\hspace{1.5cm}
\begin{minipage}{0.15\textwidth}
    \begin{subequations}
    \begin{align*}
      min    \overline{f}(\overline{x},\overline{TS}) \\
      \text{s.t. } \overline{g}(\overline{x},\overline{TS}) \leq 0, 
    \end{align*}
    \end{subequations}
\end{minipage}
\vspace{0.3cm}

where $x$ are the decision variables, $TS$ represent the original time series used as data, and $f$ and $g$ are the objective function and constraints. The number of variables is proportional to the cardinality of the time series, $|x| \sim |TS|$, and hence there is a large amount of variables $x$ and a large number of constraints $g$ in the full problem, which often leads to computational complexity and intractability. Through a TSA process often obtain through clustering algorithms, the original TS are transformed into the aggregated $\overline{TS}$, where $|\overline{TS}| \ll |TS|$. Correspondingly $|\overline{TS}| \ll |x|$, and so is the number of constraints, which leads to a significant reduction in computational burden of the aggregated optimization model with respect to the full optimization model. In general, there is no guarantee with respect to the quality of aggregated versus full model results. 

\subsection{Economic dispatch problem}
\label{subsec:ED}

The full economic dispatch (ED) optimization problem minimizes overall power system cost over a time horizon of hours $h$ by determining the optimal production of generating units g, each of which has their corresponding variable operating costs $C_g$, and upper and lower bounds while meeting system demand $D_{g,h}$ at each hour\footnote{If it is not possible to meet demand with existing generators, then there is non-supplied energy at a high cost. We have not modeled this explicitly for simplicity. But a fictitious generator with high operating cost and infinite upper bound can represent non-supplied energy in the presented formulation.}. Lower bound $\underline{P}_{g}$ depends on technical characteristics of the generator itself, but upper bound $\overline{P}_{g,h}$ also depends on the temporal index. Indeed $\overline{P}_{g,h}$ can be obtained as the product of the installed generator capacity $\underline{P}_{g}$ multiplied by its capacity factor $\underline{CF}_{g,h}$\footnote{For thermal generators, the capacity factor is 1, but for generators that belong to variable renewable energy sources, e.g. wind and solar, such a capacity factor depends on solar irradiation or wind speeds, which vary over time.}. The TS of this problem are system demand and capacity factors.
In an aggregated ED problem, we consider only $r$ representative hours, each of which has a weight $W$ and $|r| \ll |h|$. Both the cardinality of $|r|$, the corresponding weights $W_r$ and aggregated data $D_r$ and $\overline{P}_{g,r}$ most likely stem from a data aggregation/clustering procedure. A stylized formulation of the full (left) and aggregated (right) ED is given below:

\begin{minipage}{0.15\textwidth}
  \begin{subequations}\label{eqn:ED_Full}
  \begin{align*}
      min    \sum_{g,h} C_g p_{g,h} \\
  \text{s.t. } \sum_{g} p_{g,h} = D_h \quad \forall h  \\
  \underline{P}_{g} \leq p_{g,h} \leq \overline{P}_{g,h}  \quad \forall g,h \label{eqn:ED_FullBounds}
    \end{align*}
    \end{subequations}
\end{minipage}
\hspace{0.5cm}
\begin{minipage}{0.15\textwidth}
    \begin{subequations}
    \begin{align*}
      min    \sum_{g,r} C_g p_{g,r} W_r \\
  \text{s.t. } \sum_{g} p_{g,r} = D_r \quad \forall r  \\
  \underline{P}_{g} \leq p_{g,r} \leq \overline{P}_{g,r} \quad \forall g,r 
    \end{align*}
    \end{subequations}
\end{minipage}



\subsection{Economic dispatch and k-means clustering}
\label{subsec:Simple Example}

We consider a numerical example of the ED with one wind unit and one thermal unit over the time horizon of one year ($h=1,\cdots,8760$). The model data are hourly time series of demand and wind production factors (the latter affect $\overline{P}_{g,h}$). To obtain aggregated data for the aggregated ED model, we use the probably most frequently used input-error-based a-priori TSA method, i.e., k-means clustering. In order to run k-means, the user has to specify the total desired number of clusters, which can range from 1 to 8760. However, the choice of the number of clusters is often done on a trial and error basis. In this example, we choose 3 clusters - this seemingly small and arbitrary number will become relevant later on.  

When applying k-means on this data and demanding 3 clusters ($|r|=3,|h|=8760$), we obtain TSA results as shown in Figure \ref{fig:KmeansTSA}, where each wind-demand TS pair is plotted as an x and color-coded depending on the cluster it has been assigned to. The mean squared error (MSE) between the original hourly TS and the cluster centroids is 0.0167, which is the minimum error in the input space that can be obtained for 3 clusters. The 3 cluster centroids for demand and wind capacity factors, i.e. $D_r$ and $\overline{P}_{g,r}$), as well as the corresponding cluster weights $W_r$ - also a result of the TSA procedure - are then used as data input in the aggregated economic dispatch model. Running the aggregated economic dispatch model with 3 weighted representative periods using k-means clustered data, yields an overall error of 91\%\footnote{Calculated as the difference between the objective function value of the full model, and the objective function value of the aggregated model.} in total system costs between the full and the aggregated model results. This also shows how inefficient an a-priori TSA method can be when used in aggregated optimization models. Note that in order to achieve a 0\% output error using k-means, all 8760 clusters would have to be used.


\section{Basis-oriented time series aggregation FOR aggregated optimization models}
\label{sec:BasisTSA}

When solving aggregated optimization models, what really matters is how well aggregated model results approximate full model outputs, i.e., the output error, and not the input error. Therefore, in this section we propose an innovative a-posteriori framework of basis-oriented TSA \textbf{for} aggregated optimization models to overcome such inefficiencies. Finally, we apply this new methodology to the same ED problem from before.

\begin{figure}
     \centering
     \begin{subfigure}[b]{0.48\textwidth}
         \centering
         \includegraphics[width=\textwidth]{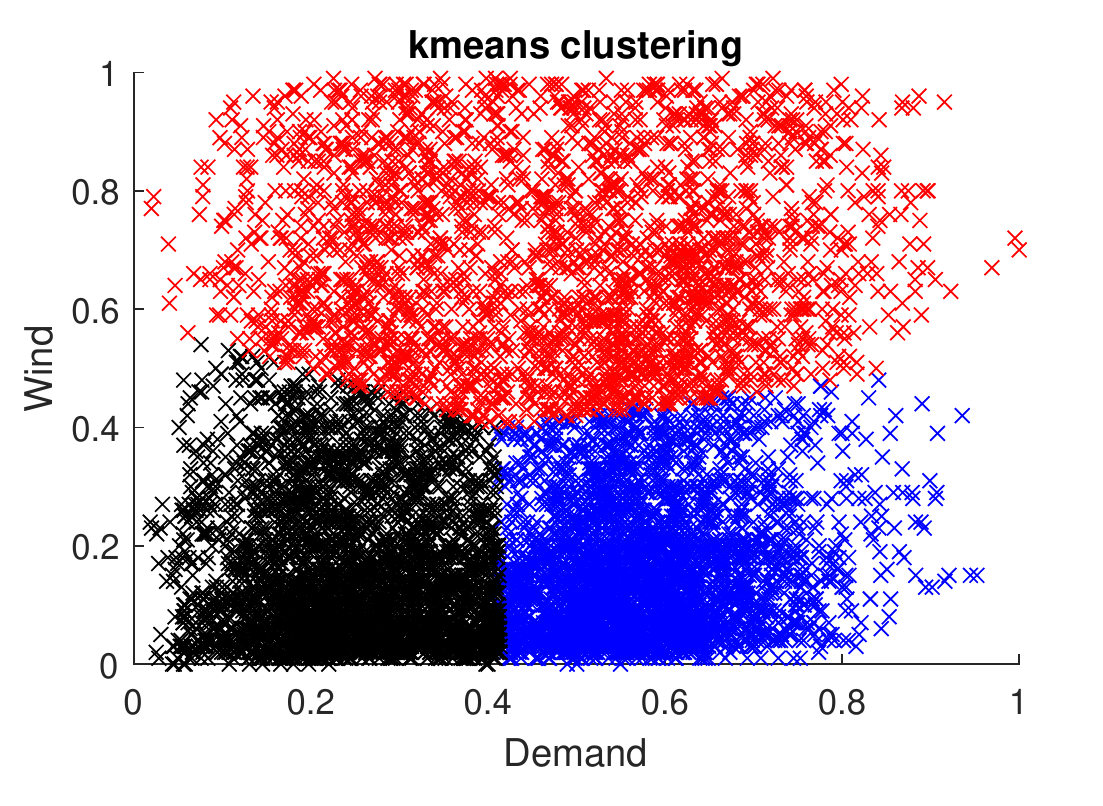}
         \caption{$ $}
         \label{fig:KmeansTSA}
     \end{subfigure}
     \begin{subfigure}[b]{0.48\textwidth}
         \centering
         \includegraphics[width=\textwidth]{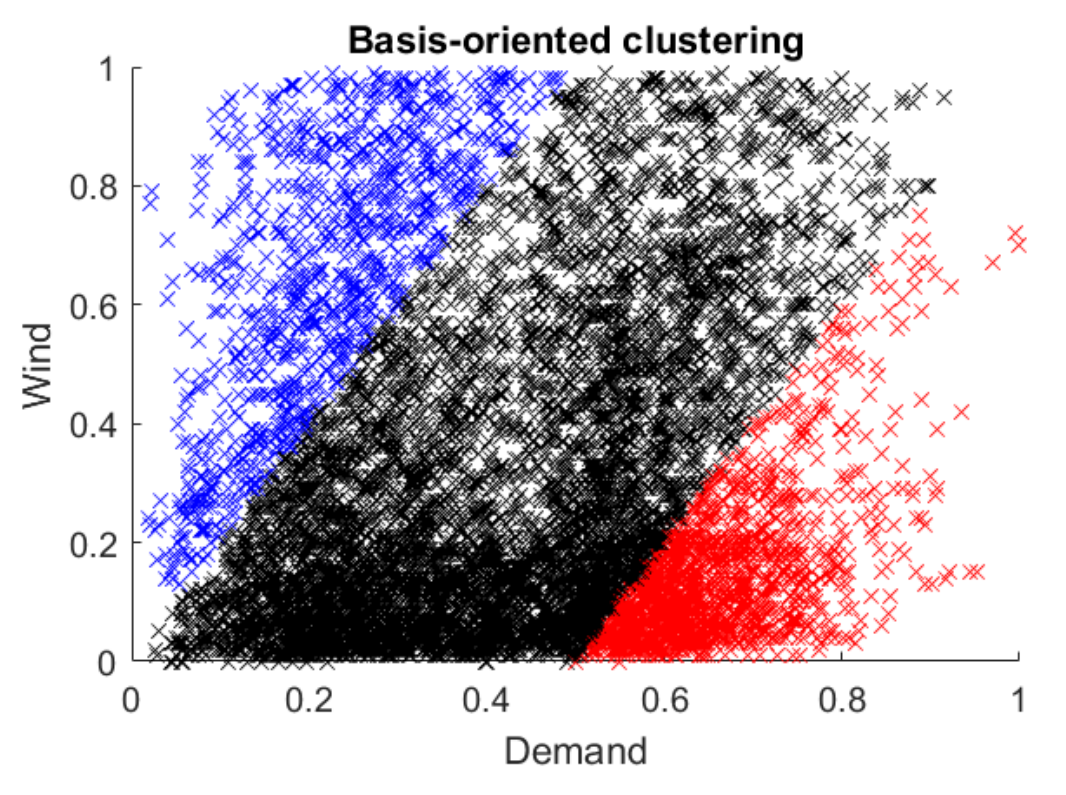}
         \caption{$ $}
         \label{fig:BasisTSA}
     \end{subfigure}
        \caption{K-means (a) and basis-oriented (b) clusters on full time series data.}
        \label{fig:ClusteringExample}
\end{figure}

\subsection{Basis-oriented time series aggregation framework}
\label{subsec:MethBasis}

We consider a linear program (LP) that depends on TS, such as the ED from section \ref{subsec:ED}. For the sake of simplicity, we analyze each time step individually and assume there are no time-period-linking constraints\footnote{If there are no time-period-linking constraints in the optimization problem, then analyzing time steps separately does not incur an error. In any case, the issue of complicating constraints will be a topic of future research.}. In particular, we focus on the optimal solution for this single time step and its corresponding basis $B$ (in the simplex framework). In the remainder of this section, we use the concrete example of the ED to introduce the idea of basis-oriented clustering. We say hour, but it could be a generic time step as well.

Consider two hours with different TS data whose optimal solution of the ED belong to the same basis $B$. The TS data affects the right-hand-side (RHS) vector of the constraints. An expected value (or centroid) of these data, yields an optimal solution that also has the same optimal basis $B$. 

\begin{theorem*} 
For each $i=1,\ldots,I$ consider the following LPs ($E_i$): $\min c^T x$ s.t. $Ax=b_i$, where $x\in \mathbb{R}^n, A\in \mathbb{R}^{mxn}, c\in \mathbb{R}^n,b_i \in \mathbb{R}^m  $ and the LPs only differ in the RHS values $b_i$.
Then, $B$ is also an optimal basis for the problem ($\overline{E}$): $\min c^T x$ s.t. $Ax= \mathbb{E}(b_i)$, where $\mathbb{E}(b_i)= \sum_i \frac{b_i}{I} $.
\end{theorem*}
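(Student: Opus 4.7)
The plan is to use the classical characterization of optimal bases in the simplex framework: a basis $B$ (indexing $m$ linearly independent columns of $A$) is optimal for $\min c^T x$ s.t. $Ax=b,\ x\geq 0$ if and only if two conditions hold, namely \emph{primal feasibility} $A_B^{-1}b \geq 0$ and \emph{dual feasibility} (non-negative reduced costs) $c^T - c_B^T A_B^{-1} A \geq 0$. The key observation, which drives the whole argument, is that only the first condition depends on the right-hand side $b$; the second depends exclusively on $c$, $A$ and the choice of $B$.

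First I would state precisely the standing assumption (standard form with $x\geq 0$) so that the above simplex characterization applies, and fix a basis $B$ which is optimal for every $E_i$. From optimality for any single $E_i$, I immediately obtain dual feasibility $c^T - c_B^T A_B^{-1} A \geq 0$; since $A$ and $c$ are identical across all $E_i$ and $\overline{E}$, this condition carries over verbatim to the averaged problem $\overline{E}$. Thus the only thing left to verify for $\overline{E}$ is primal feasibility of $B$ at the right-hand side $\mathbb{E}(b_i)$.

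Next I would exploit linearity of $b \mapsto A_B^{-1} b$. Each $E_i$ being optimal under $B$ gives $A_B^{-1} b_i \geq 0$ componentwise. Averaging,
\begin{equation*}
A_B^{-1}\,\mathbb{E}(b_i) \;=\; A_B^{-1}\!\left(\tfrac{1}{I}\sum_i b_i\right) \;=\; \tfrac{1}{I}\sum_i A_B^{-1} b_i \;\geq\; 0,
\end{equation*}
since the non-negative orthant is a convex cone closed under non-negative combinations. Hence $B$ is primal feasible for $\overline{E}$, and combined with the inherited dual feasibility, $B$ is an optimal basis for $\overline{E}$. The corresponding optimal solution is $\overline{x}_B = \mathbb{E}(A_B^{-1} b_i) = \mathbb{E}(x^{(i)}_B)$, i.e. the average of the per-scenario optimal basic solutions.

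I do not expect any serious obstacle; the argument is a one-line convexity observation on top of the simplex optimality conditions. The only subtle points worth flagging in the write-up are (i) making explicit that the statement concerns a \emph{basis}, not just a solution, so that reduced-cost conditions are well defined and RHS-independent; (ii) noting that the result extends immediately from the uniform average to any convex combination $\sum_i \lambda_i b_i$ with $\lambda_i \geq 0$, $\sum_i \lambda_i = 1$, which is exactly what is needed when the clusters in the basis-oriented TSA carry non-uniform weights $W_r$; and (iii) observing that degeneracy does not affect the argument, since the certificate we use is the pair (primal feasibility, dual feasibility) of the basis $B$ itself rather than uniqueness of the primal optimum.
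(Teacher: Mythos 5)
Your proof is correct, but it takes a genuinely different route from the paper's. The paper argues by contradiction: it supposes some other basis $\overline{B}\neq B$ is strictly better for $(\overline{E})$, writes its objective value as the average $\frac{1}{I}\sum_i c_{\overline{B}}^T\overline{B}^{-1}b_i$, and then invokes optimality of $B$ for each $(E_i)$ to bound each summand from below by $c_B^T B^{-1}b_i$, contradicting the assumed strict improvement. You instead verify optimality of $B$ for $(\overline{E})$ directly through the simplex optimality certificate: dual feasibility $c^T - c_B^T A_B^{-1}A\geq 0$ is independent of the right-hand side and so carries over for free, while primal feasibility $A_B^{-1}\mathbb{E}(b_i)\geq 0$ follows by averaging the conditions $A_B^{-1}b_i\geq 0$. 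Your version is arguably the more robust of the two: the paper's key inequality $c_{\overline{B}}^T\overline{B}^{-1}b_i\geq c_B^T B^{-1}b_i$ tacitly treats $\overline{B}^{-1}b_i$ as a feasible point of $(E_i)$, which requires $\overline{B}^{-1}b_i\geq 0$ and is never checked; likewise the paper never verifies that $B$ is even feasible for $(\overline{E})$, which is precisely the averaging step you make explicit. Your approach also delivers, at no extra cost, the identification of the optimal solution of $(\overline{E})$ as the average of the per-scenario basic solutions (which is what the surrounding discussion in the paper actually relies on), the extension to arbitrary convex combinations with weights $W_r$, and clarity about degeneracy. What the paper's contradiction argument buys, by comparison, is mainly that it stays at the level of objective values without invoking the reduced-cost characterization explicitly; but it pays for that with the feasibility gaps noted above. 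Do make sure to state the standing assumption $x\geq 0$ (standard form), as you propose, since the theorem as printed omits it and the notion of an optimal basis is otherwise ill-posed.
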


\begin{proof}
We proof this by contradiction. Assume that $B$ is not the optimal basis for problem ($\overline{E}$): $\min c^T x$ s.t. $Ax= \mathbb{E}(b_i)$. Instead, let $\overline{B}$ ($\neq B$) and $\overline{N}$ be the optimal basis and the non-basis matrices of ($\overline{E}$). Under this assumption, it follows that $c^T_{\overline{B}} x_{\overline{B}} < c^T_B x_B$ for this problem.
In ($\overline{E}$) we obtain that $Ax = \overline{B} x_{\overline{B}} + \overline{N} x_{\overline{N}} = \mathbb{E}(b_i)$. By definition $x_{\overline{N}}=0$, so we further simplify $Ax = \overline{B} x_{\overline{B}}= \mathbb{E}(b_i)$ and it follows that $x_{\overline{B}} = \overline{B}^{-1} ( \mathbb{E}(b_i))$. We now substitute this optimal solution in the objective function of ($\overline{E}$), which yields 
$c^T x = c^T_{\overline{B}} x_{\overline{B}} = c^T_{\overline{B}} \overline{B}^{-1} ( \mathbb{E}(b_i))= c^T_{\overline{B}} \overline{B}^{-1} ( \sum_i \frac{b_i}{I} ) = \frac{1}{I} c^T_{\overline{B}} \overline{B}^{-1} b_1 + \ldots + \frac{1}{I} c^T_{\overline{B}} \overline{B}^{-1}  b_I $. 
Since we know that $B$ is an optimal basis for each problem ($E_i$), we can say that:
$\frac{1}{I} c^T_{\overline{B}} \overline{B}^{-1} b_1 + \ldots + \frac{1}{I} c^T_{\overline{B}} \overline{B}^{-1}  b_I \geq
\frac{1}{I} c^T_B B^{-1} b_1 + \ldots + \frac{1}{I} c^T_B B^{-1}  b_I = c^T_B x_B $. This would imply that $B$ is an optimal basis for ($\overline{E}$), which is a contradiction.
\end{proof}

This result has significant ramifications with respect to clustering TS data: imagine two hours with different data, i.e. $b_1$ and $b_2$, but an identical optimal basis $B$ for the underlying LP, then the full optimization model (with individually represented hours), and the aggregated optimization model where we only have one expected hour (i.e. the cluster centroid or expected value $\frac{b_1+b_2}{2}$), yield the same objective function value, and the same expected results for the variables.
\textbf{Hence, those two hours can be merged without losing any accuracy in the final aggregated model}.
In other words, if hours are aggregated within their basis and represented by the cluster centroid, then the aggregated model results will be exactly the same as the full hourly model results and have zero output error in expectation. This shows: first, that TSA for optimization purposes must be based on the impact of the aggregation on the optimization output error and not on similarity of input data as in traditional methods; second, it introduces basis-oriented TSA as a promising way of clustering \textbf{for} aggregated optimization problems.

\subsection{Economic dispatch and basis-oriented clustering}
\label{subsec:Simple Example basis clustering}

Applying basis-oriented clustering to the above-mentioned ED example shows that there are only 3 different bases. Therefore, we only require 3 clusters. 
In Figure \ref{fig:BasisTSA}, we have color-coded each hour depending on the basis (and corresponding cluster) this hour belongs to: blue (the wind generator is the marginal generator), black (thermal is on the margin), and red (hours with non-supplied energy). The MSE in the input space is 0.0385, which is 2.3 times larger than the MSE obtained by k-means, so the input error is higher with the obtained clusters. However, if we cluster all hours within their basis, then the aggregated optimization results are exact! Solving the aggregated optimization model with only 3 representative periods that have been clustered, accounting for the corresponding bases, yields an error of 0\%. Apart from being theoretically exact, basis-oriented clustering also establishes the maximum number of clusters necessary to obtain exact optimization results, which is nothing current TSA methods can offer.

\section{Conclusion}
\label{sec:conclusion}

The takeaways from this paper are as follows. 
First, a-priori TSA methods are fundamentally flawed when used in/for  optimization models and therefore should be abandoned and replaced by a-posteriori methods. As we have shown by counter-example, the lowest input error (as indicated by MSE) does not translate into the lowest (or even a low) output error when approximating full optimization model results. 
Second, basis-oriented TSA can achieve a tremendous reduction (3 out of 8760 hours) in input data of several orders of magnitude while replicating full model results exactly. 
This confirms that picking clusters intelligently
can outperform traditional one-size-fits-all a-priori TSA methods (such as k-means) even if those use most of the original data. 
Finally, more research is required to develop a theoretical framework on most efficient a-posteriori TSA methods. The basis-oriented TSA proposed here could be a starting point. In future research, we plan to explore basis-oriented TSA further to see if it can be extended to large-scale, realistic problems with, e.g., time-linking and discrete constraints.

\section*{Acknowledgment}

I want to thank D. Cardona-Vasquez, D. Di Tondo, S. Pineda and J.M. Morales for their comments.

\ifCLASSOPTIONcaptionsoff
  \newpage
\fi

\bibliographystyle{elsarticle-num} 
\bibliography{main}




\end{document}